\def\de{{\rm d}}
\newtheorem{theorem}{Theorem}[section]
\newtheorem{corollary}{Corollary}[section]
\newtheorem{remark}{Remark}[section]
\numberwithin{equation}{section}
\title{Least squares volatility change point estimation for partially observed diffusion processes }
\begin{document}

\author{Alessandro De Gregorio,  \,\,Stefano M. Iacus\footnote{\textbf{email:} alessandro.degegorio@unimi.it, stefano.iacus@unimi.it}\\
{\it Dipartimento di Scienze Economiche, Aziendali e Statistiche}\\
{\it	Via Conservatorio 7,  20122 Milan - Italy}
}

\maketitle

\begin{abstract}
A one dimensional diffusion process $X=\{X_t, 0\leq  t \leq T\}$, with drift $b(x)$ and diffusion coefficient $\sigma(\theta, x)=\sqrt{\theta} \sigma(x)$ known up to $\theta>0$, is
supposed to switch volatility regime at some point $t^*\in (0,T)$. On the
basis of discrete time observations from $X$, the problem is the
one of estimating the instant of change in the volatility
structure  $t^*$ as well as the two values of $\theta$, say
$\theta_1$ and $\theta_2$, before and after the change point. It
is assumed that the sampling occurs at regularly spaced times
intervals of length $\Delta_n$ with $n\Delta_n=T$. To work out our statistical problem we use a least squares approach. Consistency, rates of
convergence and distributional results of the estimators are
presented under an high frequency scheme. We also study the case of a diffusion process with unknown drift and unknown volatility but constant. 
\end{abstract}

\noindent
{\bf Key words:}  discrete observations, diffusion process, change point problem, volatility regime switch, nonparametric estimator.

\noindent

\newpage

\section{Introduction}
Change point problems have originally arisen in the context of quality control, but the problem of abrupt changes in general arises in many contexts like epidemiology, rhythm analysis in electrocardiograms, seismic signal processing, study of archeological sites, financial markets. 

Originally, the problem was considered for i.i.d samples; see Hinkley (1971), Cs\"{o}rg\H{o} and Horv\'{a}th (1997), Inclan and Tiao (1994) and it moved naturally into the time series context as 
economic time series often exhibit prominent evidence for structural change in the underlying 
model; see, for example, Kim {\it et al.} (2000), Lee {\it et al.} (2003), Chen {\it et al.} (2005) and the papers cited therein. 

In this paper we deal with a change-point problem for the volatility of a diffusion process observed at discrete times. The instant of the change in volatility regime is identified retrospectively by the method of the least squares along the lines proposed in Bai (1994). For continuous time observations of diffusion processes Lee {\it et al.} (2006) considered the change point estimation problem for the drift.  In the present work the drift coefficient of the stochastic differential equation is assumed known and if not it is estimated nonparametrically. 

The paper is organized as follows. Section \ref{sec:model} introduces the model of observation and the estimator of the change point instant and the estimators of the volatilities before and after the change. Section \ref{consistency} analyzes the asymptotic properties of the estimators. In Section \ref{sec:nonp} considers the case when the drift is unknown and the diffusion coefficient does not depend on the state of the process. All the proofs of the Theorems are contained in the Appendix.

\section{The least squares estimator}\label{sec:model}
We denote by $X = \{X_t, 0 \leq t\leq T\}$ the diffusion process, with state space $I=(l,r),\, -\infty\leqslant l\leqslant r \leqslant +\infty$, solution of the stochastic differential equation
\begin{equation}
\de X_t = b(X_t) \de t + \sqrt{\theta} \sigma(X_t) \de W_t, 
\label{eq:sde}
\end{equation}
with $X_0=x_0$ and  $\{W_t, t\geq 0\}$ a standard Brownian motion.
 We suppose that the value of $\theta$ is $\theta_{1}$ up to some unknown time $t^*\in(0,T)$ and $\theta_{2}$ after, i.e. $\theta=\theta_1 1_{\{t\leqslant t^*\}}+\theta_2 1_{\{t>t^*\}}$. The parameters $\theta_1$ and $\theta_2$ belong to $\Theta$, a compact set of $\mathbb{R}^+$. The coefficients $b:I\to \mathbb{R}$ and $\sigma:I\to (0,\infty)$ are supposed to be known with continuous derivatives.  
The continuity of the derivatives of $b(\cdot)$ and $\sigma(\cdot)$ assures that it exists a unique continuous process solution to \eqref{eq:sde}, which is defined up an explosion time (see Arnold, 1974). Let $s(x)=\exp\left\{-\int_{x_0}^x 2b(u)/\sigma^2(u)du\right\}$ be the scale function (where $ x_0$ is an arbitrary point inside $I$). 
 \begin{itemize}
 \item[A1.] $\lim_{x_1\to l}\int_{x_1}^x s(u)du=-\infty$, 
 $\lim_{x_2\to r}\int_{x}^{x_2} s(u)du=+\infty$, where $l<x_1<x<x_2<r$.
 \end{itemize}
Condition A1 guarantees  that the exit time from $I$ is infinite (see Karatzas and Shevre, 1991).

The process $X$ is observed at $n+1$ equidistant discrete times $0=t_0<t_1<...<t_n=T,$ with $t_i =
i \Delta_n$, $n \Delta_n = T$. For the sake of simplicity we will write  $X_i = X_{t_i}$ and $W_{t_i}=W_i$. The asymptotic framework
is an high frequency scheme: $n \to \infty$, $\Delta_n\to 0$ with $n\Delta_n=T$.  Given the observations $X_i,i=0,1,...,n,$ the aim of this work is to estimate the change time $t^*$ as  well as the two quantities $\theta_1,\theta_2$.

In order to obtain a simple least squares estimator, we follow the same approach proposed in Bai (1994). To this end we make use of Euler approximation to the solution of \eqref{eq:sde}, i.e.
$$
X_{i+1} = X_i + b(X_i) \Delta_n + \sqrt{\theta} \sigma(X_i)
(W_{i+1}-W_i),$$
and introduce the quantities
$$
Z_i = \frac{X_{i+1}-X_{i} - b(X_i)\Delta_n}{\sqrt{\Delta_n} \sigma(X_i)} =
\sqrt{\theta} \frac{W_{i+1}-W_{i}}{\sqrt{\Delta_n}},\quad i=1,...,n,
$$
which represent $n$ independent standard normal variables. 

We denote by $k_0 = [n\tau_0]$ and $k = [n\tau]$, $\tau, \tau_0 \in (0,1)$, where $[x]$ is the integer part of the real value $x$. Given that $\Delta_n\to 0$, without loss of generality, we can assume that the process switches volatility regime exactly at time $t_i=t_{k_0}=k_0\Delta_n=t^*$.
The least squares estimator of the change point is obtained as follows
\begin{eqnarray}
\hat k_0& =& \arg\min_k\left( \min_{\theta_1, \theta_2}
\left\{\sum_{i=1}^{k} (Z_i^2 - \theta_1)^2 + \sum_{i=k+1}^n (Z_i^2 -
\theta_2)^2 \right\}
\right)\notag\\
&=&\arg\min_k\left\{\sum_{i=1}^{k} (Z_i^2 - \bar \theta_1)^2+\sum_{i=k+1}^n (Z_i^2 -\bar \theta_2)^2\right\},
\label{eq:ls}
\end{eqnarray}
where
 \begin{eqnarray*}
\bar \theta_1=\arg \min_{\theta_1}\sum_{i=1}^{k} (Z_i^2 - \theta_1)^2=\frac1k\sum_{i=1}^k Z_i^2=\frac{S_k}{k}
\end{eqnarray*}
 \begin{eqnarray*}
\bar \theta_2=\arg \min_{\theta_2}\sum_{i=k+1}^{n} (Z_i^2 - \theta_2)^2=\frac{1}{n-k}\sum_{i=k+
1}^n Z_i^2=\frac{S_{n-k}}{k}
\end{eqnarray*}
and $k=1,...,n-1.$ We introduce the following quantity
$$
U_k^2 = \sum_{i=1}^{k}(Z_i^2 - \bar\theta_{1})^2 +
 \sum_{i=k+1}^n (Z_i^2 - \bar\theta_{2})^2
$$
then we have that
$$
\hat k_0 = \arg\min_k U_k^2
$$

To study the asymptotic properties of $U_k^2$ it is better to
rewrite it in the following way
$$
U_k^2 = \sum_{i=1}^n (Z_i^2 - \bar Z_n)^2 - n V_k^2
$$
where $\bar Z_n = \frac{1}{n}\sum_{i=1}^n Z_i^2=\frac{1}{n}S_n$
and

$$
V_k= \left(\frac{k(n-k)}{n^2}\right)^\frac12\left(\bar\theta_{2}-\bar\theta_{1}\right)=\frac{S_n D_k}{\sqrt{k(n-k)}}
$$
with
$
 D_k =k/n - S_k/S_n.
 $

This representation of $U_k^2$ is obtained by lengthy but straightforward algebra and it is rather useful because minimization of $U_k^2$ is equivalent to the maximization of $V_k$ and hence $D_k$. So it is easier to consider the following  estimator of $k_0$
\begin{equation}\label{est}
\hat k_0 = \arg\max_k |D_k| = \arg\max_k(k(n-k))^\frac12|V_k|
\end{equation}

As a side remark, it can be noticed that for fixed $k$ (and under suitable hypothesis), $D_k$ can be seen as an approximate likelihood ratio statistics for testing the null hypothesis of no change in volatility (see e.g. Inclan and Tiao, 1994). We do not discuss approximate likelihood approach in this paper.

Once $\hat k_0$ has been obtained, the following estimator of the parameters $\theta_1$ and $\theta_2$ can be used
\begin{equation}\label{est1}
\hat\theta_{1} = \frac{S_{\hat k_0}}{\hat k_0},\quad
\hat\theta_{2} = \frac{S_{n-\hat k_0}}{n-\hat k_0 }.
\end{equation}
We will prove consistency of $\hat k_0$, $\hat\theta_{1}$ and $\hat\theta_{2}$ and also distributional results for these estimators.

Our first result concerns the asymptotic distribution of the statistic $D_k$ under the condition that no change of volatility occurs during the interval $[0,T]$.

\begin{theorem}\label{H0}
Assume  that $H_0$: $\theta_1=\theta_2=1$ , then we have that
\begin{equation}
 \sqrt{\frac{n }{2}} |D_k|\stackrel{d}{\to}
 |W^0(\tau)|
\end{equation}
where $\{W^0(\tau),0\leqslant \tau \leqslant 1\}$ is a Brownian bridge.
\end{theorem}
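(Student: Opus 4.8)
The plan is to exploit the fact that, under $H_0$, the normalized increments $Z_i$ are independent standard normal variables, so that the $Z_i^2$ are i.i.d.\ $\chi^2_1$ with $\E[Z_i^2]=1$ and $\mathrm{Var}(Z_i^2)=2$. Writing $Y_i=Z_i^2-1$ for the centered summands and $T_k=\sum_{i=1}^k Y_i$ for their partial sums, the first step is a purely algebraic rearrangement of $D_k$. Since $S_k=k+T_k$ and $S_n=n+T_n$, one obtains
\[
D_k=\frac kn-\frac{S_k}{S_n}=\frac{kS_n-nS_k}{nS_n}=\frac{kT_n-nT_k}{n(n+T_n)}.
\]
This identity is the crux: it isolates the genuine stochastic fluctuation (the linear combination $kT_n-nT_k$ of centered partial sums) from a denominator that is, to leading order, deterministic.

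Next I would bring in the limit theorems. Because $\mathrm{Var}(Y_i)=2$, the correct normalization is $\sqrt{2n}$: by Donsker's invariance principle the partial-sum process satisfies $T_{[ns]}/\sqrt{2n}\Rightarrow B(s)$ in $D[0,1]$, with $B$ a standard Brownian motion. For the single fixed $\tau$ of the statement it actually suffices to apply the bivariate central limit theorem to the two \emph{independent} blocks $\sum_{i=1}^{[n\tau]}Y_i$ and $\sum_{i=[n\tau]+1}^{n}Y_i$, which yields $\bigl(T_k/\sqrt{2n},\,T_n/\sqrt{2n}\bigr)\stackrel{d}{\to}\bigl(B(\tau),B(1)\bigr)$ with $k=[n\tau]$. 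Simultaneously the law of large numbers gives $T_n/n\to0$, so the denominator factor $1+T_n/n\to1$. Dividing numerator and denominator of the identity above by $n$ gives
\[
\sqrt{\tfrac n2}\,D_k=\frac{(k/n)\,T_n/\sqrt{2n}-T_k/\sqrt{2n}}{1+T_n/n},
\]
and, using $k/n\to\tau$ together with Slutsky's theorem, the right-hand side converges in distribution to $\tau B(1)-B(\tau)$.

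Finally I would identify the limit. The process $B(\tau)-\tau B(1)$ is, by definition, a standard Brownian bridge $W^0(\tau)$, so $\tau B(1)-B(\tau)=-W^0(\tau)$; since the bridge is symmetric and the continuous mapping theorem applies to $x\mapsto|x|$, we conclude $\sqrt{n/2}\,|D_k|\stackrel{d}{\to}|W^0(\tau)|$, which is the assertion. I expect the two delicate points to be, first, the handling of the random denominator $S_n$, controlled here by the centering and the law of large numbers that force $S_n\sim n$, and second, the joint (correlated) convergence of the pair $(T_k,T_n)$, cleanly reduced above to independent increments. The remaining foundational issue, namely the identification of the $Z_i$ with exact standard normals, rests on the Euler approximation and is taken as given in the model of Section~\ref{sec:model}; once that is granted, everything else is routine bookkeeping with Slutsky's theorem and the continuous mapping theorem.
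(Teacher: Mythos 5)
Your proposal is correct and follows essentially the same route as the paper: both center the squared increments ($\xi_i = Z_i^2-1$, your $Y_i$), exploit the same algebraic identity relating $D_k$ to the centered partial sums (the paper writes it as $\mathcal{S}_k - \tfrac{k}{n}\mathcal{S}_n = -D_k\sum_{i=1}^n Z_i^2$, which is your $D_k = \bigl(kT_n - nT_k\bigr)/\bigl(n(n+T_n)\bigr)$ rearranged), invoke a CLT for the chi-squared summands, and dispose of the random denominator via the law of large numbers and Slutsky. The one substantive difference is that the paper runs the argument through the interpolated partial-sum process and Donsker's theorem, so that the convergence holds at the process level in $D[0,1]$; your reduction to a bivariate CLT is cleaner for the fixed-$\tau$ statement of the theorem itself, but the functional version is what the subsequent Corollary (with its suprema over $k$) actually requires, so the Donsker formulation is not a mere stylistic choice in the context of the paper.
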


\begin{corollary}
From Theorem \ref{H0} we derive immediately that for $\delta \in (0,1/2)$
\begin{equation}
\sqrt{\frac{n}{2}}\sup_{\delta n\leq k \leq (1-\delta)n}|D_k|\stackrel{d}{\to}
\sup_{\delta\leq \tau \leq (1-\delta)}|W^0(\tau)|,
\end{equation}
\begin{equation}\label{ash0}
\sqrt{\frac{n}{2}}\sup_{\delta n\leq k \leq (1-\delta)n}|V_k|\stackrel{d}{\to}
\sup_{\delta\leq \tau \leq (1-\delta)}(\tau(1-\tau))^{-1/2}|W^0(\tau)|.
\end{equation}

\end{corollary}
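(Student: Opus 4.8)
\section*{Proof proposal for the Corollary}

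The plan is to read Theorem \ref{H0} not as a one--dimensional (fixed $\tau$) limit but as weak convergence of the \emph{whole} rescaled process $\tau\mapsto\sqrt{n/2}\,D_{[n\tau]}$ to a Brownian bridge, and then to read off both displays by the continuous mapping theorem. To expose this functional structure I would first set $Y_i=Z_i^2-1$ and $T_k=\sum_{i=1}^k Y_i$. Under $H_0$ the $Z_i$ are i.i.d.\ standard normal, so the $Y_i$ are i.i.d., centered, with $\E Y_i^2=2$. Writing $S_k=k+T_k$ and using $D_k=(kS_n-nS_k)/(nS_n)$, a short computation gives, with $k=[n\tau]$,
$$
\sqrt{\tfrac n2}\,D_{[n\tau]}=\frac{n}{S_n}\Bigl(\tfrac{[n\tau]}{n}\,\tfrac{T_n}{\sqrt{2n}}-\tfrac{T_{[n\tau]}}{\sqrt{2n}}\Bigr).
$$
By Donsker's invariance principle the partial--sum process $T_{[n\tau]}/\sqrt{2n}$ converges weakly in $D[0,1]$ to a standard Brownian motion $W$, while $S_n/n\to1$ by the law of large numbers; hence the right--hand side converges to $\tau W(1)-W(\tau)=-W^0(\tau)$, a Brownian bridge. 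This is exactly the functional strengthening of Theorem \ref{H0} that I would use below.

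\emph{First display.} I would introduce the functional $\phi_\delta(x)=\sup_{\delta\le\tau\le1-\delta}|x(\tau)|$, which is continuous on $C[0,1]$ for the uniform norm; since $W^0$ has a.s.\ continuous paths, $\phi_\delta$ is a.s.\ continuous at the limit. The continuous mapping theorem then gives
$$
\sqrt{\tfrac n2}\sup_{\delta n\le k\le(1-\delta)n}|D_k|=\phi_\delta\!\Bigl(\sqrt{\tfrac n2}\,D_{[n\cdot]}\Bigr)\stackrel{d}{\to}\sup_{\delta\le\tau\le1-\delta}|W^0(\tau)|,
$$
which is the first assertion. The truncation to $[\delta,1-\delta]$ is inessential here, but I keep it to match the statement for $V_k$.

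\emph{Second display.} From the identity $V_k=S_nD_k/\sqrt{k(n-k)}$ one gets, again with $k=[n\tau]$,
$$
\sqrt{\tfrac n2}\,V_{[n\tau]}=\frac{S_n/n}{\sqrt{([n\tau]/n)(1-[n\tau]/n)}}\;\sqrt{\tfrac n2}\,D_{[n\tau]}.
$$
On $[\delta,1-\delta]$ the deterministic prefactor converges uniformly to the bounded continuous function $(\tau(1-\tau))^{-1/2}$, while $S_n/n\to1$ in probability; multiplying the weakly convergent process $\sqrt{n/2}\,D_{[n\tau]}$ by this asymptotically deterministic, uniformly convergent factor (a Slutsky argument in $C[\delta,1-\delta]$) yields
$$
\sqrt{\tfrac n2}\,V_{[n\tau]}\ \Longrightarrow\ (\tau(1-\tau))^{-1/2}\,W^0(\tau)\quad\text{in }C[\delta,1-\delta],
$$
and a second application of the continuous mapping theorem with $\phi_\delta$, the prefactor being positive, delivers the second assertion. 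Here the truncation away from the endpoints is essential, since $(\tau(1-\tau))^{-1/2}$ blows up as $\tau\to0,1$.

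The only genuine obstacle is the passage from a pointwise version of Theorem \ref{H0} to the process--level convergence required by the continuous mapping theorem. If Theorem \ref{H0} is available only for fixed $\tau$, I would additionally have to prove tightness of $\{\sqrt{n/2}\,D_{[n\cdot]}\}$ in $D[0,1]$. This is where the representation above pays off: tightness of $T_{[n\cdot]}/\sqrt{2n}$ is guaranteed by Donsker's theorem (the $Y_i$ being i.i.d., centered, with finite variance), and it is inherited through the continuous operations $x\mapsto\tau x(1)-x(\tau)$ and division by $S_n/n\to1$. Once tightness is secured, the remainder is a routine combination of Slutsky's lemma and the continuous mapping theorem.
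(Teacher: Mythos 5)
Your proposal is correct and follows essentially the same route as the paper: the paper's proof of Theorem \ref{H0} already establishes the functional convergence $\sqrt{n/2}\,D_{[n\cdot]}\Rightarrow W^0$ via exactly your ingredients (centering $\xi_i=Z_i^2-1$, Donsker for the partial-sum process, the identity $\mathcal{S}_k-\tfrac{k}{n}\mathcal{S}_n=-D_k S_n$, and $S_n/n\to1$), and the Corollary is then asserted to follow immediately, i.e.\ by the continuous mapping/Slutsky steps you spell out. Your explicit treatment of the $(\tau(1-\tau))^{-1/2}$ prefactor for $V_k$ on $[\delta,1-\delta]$ is precisely the detail the paper leaves implicit.
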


The last asymptotic results are useful to test if a change point occurred in $[0,T]$. In particular it is possible to obtain the asymptotic critical values for the distribution \eqref{ash0}  by means of the same arguments used in  Cs\"{o}rg\H{o} and Horv\'{a}th (1997), pag. 25.

\section{Asymptotic properties of the estimator}\label{consistency}

We study the main asymptotic properties of the least squares estimator $\hat k_0$. We start analyzing the consistency and the rate of convergence of the change point estimator \eqref{est}. It is convenient to note that the rate of convergence is particularly important not only to describe how fast the estimator converges to the true value, but also to get the limiting distribution. The next Theorem represents our first result on the consistency.
\begin{theorem}\label{teo1}
The estimator $\hat\tau_0=\frac{\hat{k}_0}{n}$ satisfies
\begin{equation}\label{result1}
|\hat\tau_0-\tau_0|=n^{-1/2}(\theta_2-\theta_1)^{-1}O_p(\sqrt{\log n})
\end{equation}
\end{theorem}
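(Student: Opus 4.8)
The plan is to exploit the representation $\hat k_0 = \arg\max_k |D_k|$ and to decompose $D_k$ into a deterministic ``signal'' plus a stochastic ``noise'' term, in the spirit of Bai (1994). First I would replace the sums $S_k$ by their means: writing $\xi_i = Z_i^2 - \E Z_i^2$ (so $\E\xi_i=0$ and $\xi_i$ is a centred, scaled $\chi^2_1$ variable, of variance $2\theta_1^2$ for $i\le k_0$ and $2\theta_2^2$ for $i>k_0$), and setting $M_k=\sum_{i=1}^k\xi_i$ and $\bar\theta=\tau_0\theta_1+(1-\tau_0)\theta_2$. A direct computation gives the deterministic profile $\tilde D_k := k/n - \E S_k/\E S_n$, which is the piecewise-linear ``tent''
\begin{equation*}
\tilde D_k=\frac{\theta_2-\theta_1}{\bar\theta}
\begin{cases}\tau\,(1-\tau_0), & \tau=k/n\le \tau_0,\\[1mm]
\tau_0\,(1-\tau), & \tau=k/n\ge\tau_0,\end{cases}
\end{equation*}
whose unique maximum is at $k=k_0$ and which decreases linearly away from the peak,
\begin{equation*}
\tilde D_{k_0}-\tilde D_k\ \ge\ \frac{(\theta_2-\theta_1)\min(\tau_0,1-\tau_0)}{\bar\theta}\,\frac{|k-k_0|}{n}.
\end{equation*}
This is the quantitative identifiability that drives the whole argument: a larger jump $\theta_2-\theta_1$ produces a sharper peak.

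Second, I would control the fluctuation $R_k := D_k - \tilde D_k$ uniformly in $k$. Using $S_k=\E S_k + M_k$ and $S_n=n\bar\theta+M_n$ one finds $R_k = S_n^{-1}\bigl(\rho_k M_n - M_k\bigr)$ with $\rho_k=\E S_k/\E S_n\in[0,1]$, so that
\begin{equation*}
\sup_{1\le k\le n}|R_k|\ \le\ \frac{2}{S_n}\,\max_{1\le k\le n}|M_k|.
\end{equation*}
Since $S_n/n\to\bar\theta$ in probability and $\{M_k\}$ is a partial-sum process of independent, centred, sub-exponential variables, a maximal inequality (Kolmogorov/Doob, or a Bernstein tail bound combined with a union bound over the $n$ candidate indices) yields $\max_{k\le n}|M_k|=O_p(\sqrt{n\log n})$, whence
\begin{equation*}
\sup_{1\le k\le n}|R_k|=O_p\!\left(\sqrt{\tfrac{\log n}{n}}\right).
\end{equation*}
It is precisely the union bound over the $n$ possible change points, needed to pass from a pointwise tail estimate to a uniform statement, that produces the $\sqrt{\log n}$ factor in the rate.

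Third, I would combine the two ingredients. Because $\tilde D_k\ge 0$ for every $k$ when $\theta_2>\theta_1$ (the opposite case being symmetric), $\tilde D_{k_0}$ is bounded away from $0$ and $\sup_k|R_k|=o_p(1)$, with probability tending to one the maximiser satisfies $D_{\hat k_0}>0$, hence $D_{\hat k_0}\ge D_{k_0}$. Writing $D_{k_0}-D_{\hat k_0}=(\tilde D_{k_0}-\tilde D_{\hat k_0})+(R_{k_0}-R_{\hat k_0})\le 0$ and inserting the tent lower bound gives
\begin{equation*}
\frac{(\theta_2-\theta_1)\min(\tau_0,1-\tau_0)}{\bar\theta}\,\frac{|\hat k_0-k_0|}{n}\ \le\ R_{\hat k_0}-R_{k_0}\ \le\ 2\sup_{k}|R_k|,
\end{equation*}
so that, after dividing by $n$,
\begin{equation*}
|\hat\tau_0-\tau_0|\ \le\ \frac{C}{\theta_2-\theta_1}\,\sup_k|R_k|\ =\ n^{-1/2}(\theta_2-\theta_1)^{-1}O_p(\sqrt{\log n}),
\end{equation*}
which is the assertion; in particular $\hat\tau_0\to\tau_0$, so consistency comes out as a by-product.

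The step I expect to be the main obstacle is the uniform noise bound: one must establish the maximal inequality for the partial sums $M_k$ of the sub-exponential variables $Z_i^2-\theta_j$ with the correct growth in $n$, while simultaneously keeping the random normalisation $S_n$ bounded away from $0$. A secondary technical point, hidden by the paper's identification of $Z_i$ with $\sqrt\theta\,(W_{i+1}-W_i)/\sqrt{\Delta_n}$, is that the $Z_i$ are only \emph{approximately} standard normal under the true law of the diffusion; one should check, using the smoothness of $b$ and $\sigma$ together with $n\Delta_n=T$, that the Itô--Taylor remainder in the Euler scheme contributes to $\sum_i(Z_i^2-\theta_j)$ an amount negligible relative to $\sqrt{n\log n}$, so that it does not degrade the rate.
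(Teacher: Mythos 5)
Your proof is correct, and it is genuinely more self-contained than the paper's. The paper never proves the identifiability step at all: it imports the bound $|\hat\tau_0-\tau_0|\le C_{\tau_0}(\theta_2-\theta_1)^{-1}\sup_k|V_k-\E V_k|$ wholesale from formulas (10)--(14) of Bai (1994), and concentrates its effort on the fluctuation term, which it handles with the Haj\'ek--R\'enyi inequality for the self-normalized maxima $\max_{k\le n}k^{-1/2}\left|\sum_{i\le k}(Z_i^2-\theta_1)\right|$. You instead work directly with $D_k$, compute the deterministic tent profile $\tilde D_k$ explicitly (your formula is right, including the slope constant $(\theta_2-\theta_1)\min(\tau_0,1-\tau_0)/\bar\theta$), and close with an argmax-comparison argument; this replaces the appeal to Bai by a short, verifiable computation, at the price of redoing what Bai already did. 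Two remarks on your noise step. First, your attribution of the $\sqrt{\log n}$ factor to ``the union bound over the $n$ candidate indices'' mislocates its origin: in the paper it arises from the Haj\'ek--R\'enyi weights through the harmonic sum $\sum_{k\le n}k^{-1}=O(\log n)$, whereas in your unnormalized formulation no logarithm is needed at all, since Kolmogorov's maximal inequality already gives $\max_{k\le n}|M_k|=O_p(\sqrt{n})$; your route therefore actually delivers the stronger conclusion $|\hat\tau_0-\tau_0|=(\theta_2-\theta_1)^{-1}O_p(n^{-1/2})$, which is consistent with (and still weaker than) the improved rate of Theorem \ref{consim}. Since the theorem asserts only an upper bound, your conservative $O_p(\sqrt{n\log n})$ claim is harmless. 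Second, your step ``$D_{\hat k_0}>0$ with probability tending to one'' requires $\tilde D_{k_0}>2\sup_k|R_k|$; for fixed $\theta_1\ne\theta_2$ this is automatic, and when $\vartheta_n=\theta_2-\theta_1\to 0$ it is exactly assumption A2, the same implicit requirement hidden in the paper's own proof, so nothing is lost. Finally, your closing observation about the Euler remainder is apt: the paper identifies $Z_i$ with $\sqrt{\theta}(W_{i+1}-W_i)/\sqrt{\Delta_n}$ exactly and never bounds the discretization error, so that gap belongs to the paper, not to your argument.
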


Theorem \ref{teo1} implies consistency of our estimator, in fact we have that $n^\beta(\hat\tau_0-\tau_0)\to0$ in probability for any $\beta\in(0,1/2)$. We are able to improve the rate of convergence of $\hat\tau_0$.

\begin{theorem}\label{consim}
We have the following result
\begin{equation} \label{imrate}
\hat\tau_0-\tau_0=O_p\left(\frac{1}{n(\theta_2-\theta_1)^2}\right).
\end{equation}
\end{theorem}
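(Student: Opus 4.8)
The plan is to sharpen the $n^{-1/2}(\theta_2-\theta_1)^{-1}O_p(\sqrt{\log n})$ bound of Theorem~\ref{teo1} by the classical change-point balancing argument, in which the deterministic drift of the objective function away from $k_0$ is played off against its stochastic fluctuation. Write $a=\theta_2-\theta_1$ and assume without loss of generality $a>0$ (the case $a<0$ is symmetric). Since $\hat\tau_0-\tau_0=(\hat k_0-k_0)/n$, estimate \eqref{imrate} is equivalent to $|\hat k_0-k_0|=O_p(a^{-2})$, so it suffices to show that for every $\varepsilon>0$ there is a constant $B$ with $P(|\hat k_0-k_0|>Ba^{-2})<\varepsilon$ for all $n$ large. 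Because $\hat k_0=\arg\max_k|D_k|$ by \eqref{est}, on the complementary event one has $|D_{\hat k_0}|\geq |D_{k_0}|$, and the entire problem reduces to controlling the increments $D_{k_0+j}-D_{k_0}$ uniformly over $j$ in a suitable range.

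First I would invoke Theorem~\ref{teo1} to localize: with probability tending to one, $\hat k_0$ lies in the shrinking window $|k-k_0|\leq m_n:=C\,n^{1/2}a^{-1}\sqrt{\log n}$, and since $m_n/n\to0$ this window sits well inside the region where the deterministic part of $D_k$ keeps a fixed sign. Setting $\mu=\tau_0\theta_1+(1-\tau_0)\theta_2$, so that $\E[S_n]=n\mu$, a direct computation of $\E[S_k]$ (namely $k\theta_1$ for $k\leq k_0$ and $k_0\theta_1+(k-k_0)\theta_2$ for $k>k_0$) shows that the deterministic version $d_k$ of $D_k$, obtained by replacing $S_k$ and $S_n$ with their means, is a tent-shaped function peaked at $k_0$, with
\[
d_{k_0}=\frac{\tau_0(1-\tau_0)a}{\mu}>0,
\]
and one-sided slopes of size $\tau_0 a/(n\mu)$ to the right and $(1-\tau_0)a/(n\mu)$ to the left. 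Hence moving $j$ steps away from $k_0$ costs a deterministic amount of order $ja/n$. Because $d_{k_0}>0$ and $\hat k_0$ lies in the localization window, maximizing $|D_k|$ there coincides with maximizing $D_k$, so that $D_{\hat k_0}-D_{k_0}\geq0$.

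The stochastic part of $D_{k_0+j}-D_{k_0}$ equals $-(n\mu)^{-1}\sum_{i=k_0+1}^{k_0+j}(Z_i^2-\theta_2)$ up to the replacement of $S_n$ by $n\mu$, which is harmless because $S_n/n\to\mu$ in probability and only alters constants; the summands are centered, i.i.d., with variance $2\theta_2^2$. The inequality $D_{\hat k_0}-D_{k_0}\geq0$ then forces $\big|\sum_{i=k_0+1}^{k_0+j}(Z_i^2-\theta_2)\big|$ to exceed a quantity of order $ja$ at $j=\hat k_0-k_0$. I would control this by a peeling (dyadic blocking) argument: partition the range $Ba^{-2}<j\leq m_n$ into blocks $J_l=(2^l Ba^{-2},2^{l+1}Ba^{-2}]$ and apply Kolmogorov's maximal inequality on each $J_l$. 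Over $J_l$ the partial-sum variance is of order $2^lBa^{-2}$ while the threshold to be exceeded is of order $2^lBa^{-1}$, so Chebyshev yields a per-block bound of order $(2^lB)^{-1}$; summing the geometric series over $l\geq0$ gives a total bound of order $B^{-1}$, which is below $\varepsilon$ once $B$ is large. The same estimate applies to $j<0$ with $\theta_1$ in place of $\theta_2$ and the left slope $(1-\tau_0)a/(n\mu)$. Combining the two sides yields $|\hat k_0-k_0|=O_p(a^{-2})$, which is exactly \eqref{imrate}.

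The hard part is the uniform stochastic control in the third step: a pointwise bound on each $D_{k_0+j}-D_{k_0}$ is insufficient, since $\hat k_0$ is itself random and may pick out the least favourable $j$. The peeling device together with a maximal inequality that is uniform inside each dyadic block is precisely what upgrades the pointwise $O_p(\sqrt{j})$ size of the partial sums to a bound valid simultaneously across all scales up to $m_n$. Two secondary points require care and I would dispatch them first: the replacement of the random normalization $S_n$ by $n\mu$ must be made uniform over the localization window (by intersecting with the high-probability event $\{|S_n/n-\mu|\leq\mu/2\}$), and the Euler discretization error implicit in the definition of $Z_i$ must be shown to enter at strictly lower order, so that treating the $Z_i^2$ as scaled i.i.d.\ $\chi^2_1$ variates is legitimate.
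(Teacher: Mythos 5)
Your proof is correct and follows essentially the same route as the paper's, namely Bai's (1994) balancing of the deterministic drift of order $|k-k_0|\vartheta_n/n$ against stochastic fluctuations of order $\sqrt{|k-k_0|}/n$, made uniform in $k$ by a maximal inequality after localizing with the already-established consistency. The differences are presentational rather than substantive: the paper invokes the Haj\'ek--R\'enyi inequality (obtaining the key bound $C/M$ over the set $|k-k_0|>M\vartheta_n^{-2}$) where you re-derive the same estimate by dyadic peeling plus Kolmogorov's maximal inequality, and it handles the absolute value by splitting $\{\sup_k |V_k|\ge |V_{k_0}|\}$ into the two one-sided events $P$ and $Q$ rather than by your localization-plus-positivity argument.
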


It is also possible to derive the asymptotic distribution of $\hat\tau_0$ under our limiting framework for small variations of the rate of change of the direction. The case $\vartheta_n=\theta_2-\theta_1$ equal to a constant is less interesting because when $\vartheta_n$ is large the estimate of $k_0$ is quite precise. By adding the condition
\begin{itemize}
\item[A2.] Assume that $$\vartheta_n\to 0,\quad \frac{\sqrt{n}\vartheta_n}{\sqrt{\log n}}\to \infty$$
\end{itemize}
Under A2 the consistency of $\hat\tau_0$ follows immediately either from Theorem \ref{teo1} or Theorem \ref{consim}. In order to obtain the next result, it is useful to observe that
\begin{equation}
\hat k_0=\arg\max_kV_k^2=\arg\max_kn(V_k^2-V_{k_0}^2)
\end{equation}
and to define a two-sided Brownian motion $\mathcal{W}(u)$ in the following manner
\begin{equation}
\mathcal{W}(u)=
\begin{cases}
W_1(-u),& u<0 \\
W_2(u), &u\geq0
\end{cases}
\end{equation}
where $W_1,W_2$ are two independent Brownian motions. Now we present the following convergence in distribution result.
\begin{theorem}\label{dist}
Under assumption A2 we have that
\begin{equation}
\frac{n \vartheta_n^2(\hat\tau_0-\tau_0)}{2\tilde\theta^2}\stackrel{d}{\to}
\arg\max_v\left\{\mathcal{W}(v)-\frac{|v|}{2}\right\},
\end{equation}
where $W(v)$ is a two-sided Brownian motion and $\tilde\theta$ is a consistent estimator for $\theta_1$ or $\theta_2$.
\end{theorem}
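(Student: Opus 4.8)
The plan is to follow the classical argmax strategy for least squares change point estimators (Bai, 1994; Yao, 1987): localize around the true break, identify the limiting objective as a drifted two-sided Brownian motion, and transfer the convergence to the maximizer. Since minimizing $U_k^2$ is equivalent to maximizing $R(k):=S_k^2/k+(S_n-S_k)^2/(n-k)=nV_k^2+S_n^2/n$, we have $\hat k_0=\arg\max_k\{R(k)-R(k_0)\}$. First I would invoke Theorem \ref{consim}, which gives $\hat k_0-k_0=n(\hat\tau_0-\tau_0)=O_p(\vartheta_n^{-2})$; this confines the maximizer, with probability arbitrarily close to one, to a window $|k-k_0|\le C\vartheta_n^{-2}$. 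On this window I reparametrize by setting $m=k-k_0=[2\tilde\theta^2\vartheta_n^{-2}v]$ and study the localized process $\mathcal Q_n(v):=R(k_0+m)-R(k_0)$ on compact $v$-sets, the aim being to prove $\mathcal Q_n(\cdot)\stackrel{d}{\to}4\tilde\theta^2\{\mathcal W(\cdot)-|\cdot|/2\}$ and then to read off the argmax.

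The core computation is an exact expansion of $\mathcal Q_n(v)$ for $m>0$ (the case $m<0$ being symmetric). Writing $S_{k_0}=k_0\theta_1+A$, $S_n-S_{k_0}=(n-k_0)\theta_2+B$ and $\eta=\sum_{i=k_0+1}^{k_0+m}(Z_i^2-\theta_2)$, a direct algebraic reduction yields the clean decomposition
\begin{equation*}
R(k_0+m)-R(k_0)=-\,m\vartheta_n^2\,\frac{k_0}{k_0+m}\;-\;2\vartheta_n\,\eta\;+\;r_n(m),
\end{equation*}
in which the deterministic part collapses exactly (all $m^2$ contributions cancel) and the bulk fluctuations $A,B$ drop out of the leading stochastic term, leaving only the window sum $\eta$. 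The remainder $r_n(m)$ collects the genuinely smaller cross terms, of orders $O_p((n\vartheta_n^2)^{-1})$ and $O_p((\sqrt n\,\vartheta_n)^{-1})$.

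Next I would apply Donsker's theorem to the partial-sum process of the i.i.d.\ centered variables $Z_i^2-\theta_2$ (variance $2\theta_2^2$) over the growing window: together with the Brownian scaling $W(c\,s)\stackrel{d}{=}\sqrt c\,W(s)$ and the reparametrization $m=[2\tilde\theta^2\vartheta_n^{-2}v]$, this turns $-2\vartheta_n\eta$ into $-4\tilde\theta^2 W_2(v)$, while the drift becomes $-2\tilde\theta^2 v$. Assumption A2 enters here through $\vartheta_n\to0$, which forces $\theta_1,\theta_2$ (hence the variances $2\theta_1^2,2\theta_2^2$) to a common limit consistently estimated by $\tilde\theta$, so that the two sides carry the same scale; the window to the left of $k_0$ lies in regime $1$ and produces an independent Brownian motion $W_1$, and the two glue into the two-sided motion $\mathcal W$. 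Combining the two signs gives $\mathcal Q_n(v)\stackrel{d}{\to}4\tilde\theta^2\{\mathcal W(v)-|v|/2\}$, and since the positive factor $4\tilde\theta^2$ does not affect the location of the maximum, the normalized maximizer $\tfrac{\vartheta_n^2}{2\tilde\theta^2}(\hat k_0-k_0)=\tfrac{n\vartheta_n^2}{2\tilde\theta^2}(\hat\tau_0-\tau_0)$ should converge to $\arg\max_v\{\mathcal W(v)-|v|/2\}$.

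The main obstacle is the uniform control needed to make the last step rigorous rather than heuristic. Two things must be checked: (i) the remainder $r_n(m)$ is $o_p(1)$ uniformly over $|v|\le K$ for every $K$, which is exactly where A2 is used, since the two rates $n\vartheta_n^2\to\infty$ and $\sqrt n\,\vartheta_n\to\infty$ (both implied by A2) kill the two families of cross terms; and (ii) tightness of the argmax, i.e.\ that no mass escapes to large $|v|$, which follows by combining the localization of Theorem \ref{consim} with the negative drift $-|v|/2$ of the limit through a standard maximal-inequality argument. Granting the a.s.\ uniqueness and coercivity of the maximizer of $\mathcal W(v)-|v|/2$, the argmax functional is continuous at the limiting trajectories and the continuous mapping theorem delivers the stated convergence.
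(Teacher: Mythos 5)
Your proposal is correct, and it follows the same overall argmax strategy as the paper: localize via Theorem \ref{consim}, identify the limit of the localized objective through Donsker's theorem, and transfer to the maximizer by the continuous mapping theorem and Brownian scaling. The core computation, however, is organized genuinely differently. The paper works with $\Lambda_n(v)=n(V_k^2-V_{k_0}^2)$ and \emph{linearizes} it as $2nEV_{k_0}(V_k-V_{k_0})$ plus two terms shown to be negligible; the increment $V_k-V_{k_0}$ is then split into the processes $G(k)$ and $H(k)$ of \eqref{g}--\eqref{h}, whose analysis must handle the square-root weights $d(k)=\sqrt{(k/n)(1-k/n)}$ term by term, and the factor $2\theta_1^2$ is removed only at the very end via $aW(v)\stackrel{d}{=}W(a^2v)$. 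You instead expand the equivalent objective $R(k)=nV_k^2+S_n^2/n$ \emph{exactly}: writing $S_{k_0}=k_0\theta_1+A$ and $S_n-S_{k_0}=(n-k_0)\theta_2+B$, the deterministic part does collapse to $-m\vartheta_n^2\,k_0/(k_0+m)$ with no $m^2$ residue, the bulk fluctuations $A,B$ enter only through remainders of orders $O_p((n\vartheta_n^2)^{-1})$ and $O_p((\sqrt{n}\,\vartheta_n)^{-1})$ (I checked this algebra; both rates vanish under A2), and the noise is carried entirely by the window sum $\eta$; moreover you bake the scale $2\tilde\theta^2$ into the localization $m=[2\tilde\theta^2\vartheta_n^{-2}v]$, so no change of variables is needed at the end. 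Your route buys a cleaner argument: no linearization step, no separate bound on $n(V_k-V_{k_0})^2$, and fully explicit error rates, at the cost of redoing the algebra from scratch; the paper's route buys economy by reusing $G$, $H$ and the Haj\'ek--Renyi machinery already set up for Theorems \ref{teo1} and \ref{consim}, staying close to Bai (1994). One cosmetic point: the localization should formally be carried out with the deterministic scale $2\theta_1^2$ (or the common limit $\theta_0^2$), with $\tilde\theta$ substituted afterwards by consistency and Slutsky's theorem, since the window indexing should not be random; this is the same looseness present in the paper's own statement and proof, and it does not affect the validity of your argument.
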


Let $\theta_0$ be the limiting value of both $\theta_1$ and $\theta_2$. Using the consistency result, we are able to obtain the asymptotic distributions for the estimators $\hat \theta_1,\hat\theta_2$, defined in \eqref{est1}.
\begin{theorem}\label{distes}
Under assumption A2 we have that
\begin{equation}\label{estcon}
\sqrt{n}\left( \begin{array}{c}
\hat\theta_1-\theta_1 \\
\hat\theta_2-\theta_2 \\
\end{array} \right) \stackrel{d}{\to}N\left( 0,\Sigma \right),
\end{equation}
where
\begin{equation}
\Sigma=\left( \begin{array}{cc}
2\tau_0^{-1}\theta_0^2& 0\\
0 &2 (1-\tau_0)^{-1}\theta_0^2 \\
\end{array} \right).
\end{equation}
\end{theorem}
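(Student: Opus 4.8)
The plan is to show that the feasible estimators $\hat\theta_1,\hat\theta_2$ are asymptotically equivalent, at the $\sqrt n$ scale, to the \emph{oracle} estimators built on the true change point $k_0$, and then to establish the stated joint normal limit for the latter. To this end I first introduce
$$
\tilde\theta_1=\frac{1}{k_0}\sum_{i=1}^{k_0}Z_i^2,\qquad \tilde\theta_2=\frac{1}{n-k_0}\sum_{i=k_0+1}^{n}Z_i^2 .
$$
Recall that for $i\le k_0$ one has $Z_i^2=\theta_1 N_i^2$ with $N_i$ i.i.d.\ standard normal, so $\E Z_i^2=\theta_1$ and $\mathrm{Var}(Z_i^2)=2\theta_1^2$, while symmetrically $\E Z_i^2=\theta_2$ and $\mathrm{Var}(Z_i^2)=2\theta_2^2$ for $i>k_0$.

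Since the two defining sums of the oracle estimators run over disjoint index blocks of independent variables, $\tilde\theta_1$ and $\tilde\theta_2$ are independent. Applying the classical central limit theorem to each block and using $k_0/n\to\tau_0$ and $(n-k_0)/n\to 1-\tau_0$, I obtain $\sqrt n(\tilde\theta_1-\theta_1)\stackrel{d}{\to}N(0,2\theta_1^2/\tau_0)$ and $\sqrt n(\tilde\theta_2-\theta_2)\stackrel{d}{\to}N(0,2\theta_2^2/(1-\tau_0))$, jointly with independent limits. Because A2 forces both $\theta_1$ and $\theta_2$ to the common value $\theta_0$, the two asymptotic variances become $2\tau_0^{-1}\theta_0^2$ and $2(1-\tau_0)^{-1}\theta_0^2$, which is exactly the diagonal matrix $\Sigma$.

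It then remains to prove $\sqrt n(\hat\theta_1-\tilde\theta_1)\stackrel{p}{\to}0$ and $\sqrt n(\hat\theta_2-\tilde\theta_2)\stackrel{p}{\to}0$, after which Slutsky's theorem concludes. Consider $\hat\theta_1$ on the event $\{\hat k_0>k_0\}$ (the case $\hat k_0<k_0$ being symmetric with the roles of $\theta_1$ and $\theta_2$ exchanged). A direct rearrangement gives
$$
\hat\theta_1-\tilde\theta_1=\frac{(\hat k_0-k_0)(\theta_2-\tilde\theta_1)}{\hat k_0}+\frac{1}{\hat k_0}\sum_{i=k_0+1}^{\hat k_0}(Z_i^2-\theta_2).
$$
The first term behaves like $(\hat k_0-k_0)\vartheta_n/\hat k_0$; by Theorem \ref{consim} we have $|\hat k_0-k_0|=n|\hat\tau_0-\tau_0|=O_p(\vartheta_n^{-2})$ while $\hat k_0\sim n\tau_0$, so this term is $O_p(1/(n\vartheta_n))$ and its $\sqrt n$-multiple is $O_p(1/(\sqrt n\,\vartheta_n))$, which vanishes because A2 forces $\sqrt n\,\vartheta_n\to\infty$.

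The main obstacle is the second term, a partial sum of the centred independent variables $Z_i^2-\theta_2$ over a \emph{random} number $|\hat k_0-k_0|$ of summands determined by the data, so the central limit theorem cannot be invoked directly. Here I would instead use a tightness/maximal-inequality argument: given $\varepsilon>0$, Theorem \ref{consim} furnishes $M$ with $P(|\hat k_0-k_0|>M\vartheta_n^{-2})<\varepsilon$ for all large $n$, and on the complementary event the sum is dominated by $\max_{1\le m\le M\vartheta_n^{-2}}\big|\sum_{i=k_0+1}^{k_0+m}(Z_i^2-\theta_2)\big|$. Kolmogorov's (or Doob's) maximal inequality bounds this maximum by $O_p(\sqrt{M\vartheta_n^{-2}})=O_p(\vartheta_n^{-1})$, so the second term is again $O_p(1/(n\vartheta_n))$ and negligible after multiplication by $\sqrt n$, thanks to A2. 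The identical estimates apply to $\hat\theta_2$, which completes the reduction to the oracle estimators and hence the proof.
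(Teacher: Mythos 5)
Your proposal is correct and follows essentially the same route as the paper: both reduce $\hat\theta_j(\hat k_0)$ to the oracle estimator $\hat\theta_j(k_0)$ built on the true change point, use Theorem \ref{consim} (i.e.\ $|\hat k_0-k_0|=O_p(\vartheta_n^{-2})$) together with A2 to show the difference is $(\sqrt{n}\vartheta_n)^{-1}O_p(1)=o_p(1)$, and then apply the CLT blockwise to the oracle estimators. If anything, your treatment of the random-index partial sum via Kolmogorov's maximal inequality makes explicit a step the paper's proof only asserts.
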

\begin{remark}
It is easy to verify that the Theorems presented in this Section are also true if we consider an horizon time tending to infinite, i.e. $\Delta_n\to 0$ and $n\Delta_n=T\to \infty$ as $n\to \infty$.
\end{remark}
\paragraph{Remark on ergodic case.}
If the Euler approximation is not admissible, it is worth to consider the ergodic case.
Let $m(u)=(\sigma^2(u)s(u))^{-1}$ be the speed measure of the diffusion process $X_t$. We introduce the following assumptions:
 \begin{itemize}
 \item[A3.] $\int_l^r m(x)dx< +\infty$.
 \item[A4.] $X_0=x_0$ has distribution $P^0$.
 \end{itemize}
Assumptions A1, A3 and A4 imply that the process $X_t$ is also ergodic and strictly stationary with invariant distribution $P^0(dx)=\pi(x)dx,\, \pi(x)=m(x)/ \int_l^r m(u)du$. Under the additional condition:
\begin{itemize}
\item[A5.]  $\lim_{x\to 0}\sigma(x) \pi(x)=0$ or $\lim_{x\to \infty}\sigma(x) \pi(x)=0$ and
$$\lim_{x\to 0} \left|\frac{\sigma(x)}{2b(x)-\sigma(x)\sigma'(x)}\right|<\infty\,\, or \,\, \lim_{x\to \infty} \left|\frac{\sigma(x)}{2b(x)-\sigma(x)\sigma'(x)}\right|<\infty$$
\end{itemize}
the observed data $X_i,i=0,1,...,n,$ is a strictly stationary $\beta$-mixing sequence satisfying $k^\delta \beta_k\to 0$ as $k\to\infty $ for some fixed $\delta>1$ (see e.g A\"it-Sahalia, 1996). 

Under this setup and $n\Delta_n=T\to \infty$, similar results to the ones presented in the above can be obtained using the same techniques of Chen {\it et al.} (2005).

\section{Estimation of the change point with unknown drift}\label{sec:nonp}

We want to analyze the change point problem for a diffusion process $X_t$, when the drift coefficient $b(X_t)$ is unknown, while the diffusion coefficient is supposed unknown but independent from the state of the process $X_t$. In other words the process $X_t$ is the solution of the following reduced stochastic differential equation
 
 \begin{equation}\label{model2}
 \de X_t=b(X_t)\de t+\sqrt{\theta} \de W_t,
 \end{equation}
and the observation scheme and the asymptotics are as in Section \ref{sec:model}. Let $K\geqslant 0$ be a kernel function, i.e. $K$ is symmetric and continuously differentiable, with $\int_{\mathbb{R}}uK(u)du=0$, $\int_{\mathbb{R}}K^2(u)du<\infty$ and such that
$\int _{\mathbb{R}}K(u)du=1.$ We start introducing the following quantities

$$\hat Z_i=\frac{X_{i+1}-X_i -\hat b(X_i)\Delta_n}{
\sqrt{\Delta_n}}$$
 where

\begin{equation}
\hat b(x)=\frac{\sum_{i=1}^nK\left(
\frac{X_i-x}{h_n}\right)\frac{X_{i+1}-X_i 
}{\Delta_n}}{\sum_{i=1}^nK\left( \frac{X_i-x}{h_n}\right)}
\end{equation}
is a nonparametric estimator of the drift constructed using the full sample and $h_n$ is the bandwidth defined as in Silverman (1986).  The least squares estimator takes the following form  
 \begin{equation}\label{cpnonp}
\tilde k_0=\arg \min_k\left\{\sum_{i=1}^k \left(\hat Z_i^2-\bar {\theta}_1^*\right)^2+\sum_{i=k+1}^n \left(\hat Z_i^2-\bar {\theta}_2^*\right)^2\right\},
\end{equation}
where 
$$\bar {\theta}_1^*=\frac{\hat S_k}{k}, \quad \bar {\theta}_2^*=\frac{\hat S_{n-k}}{n-k}\,,$$
with
$\hat{S}_k=\sum_{i=1}^k \hat Z_i^2,  \hat{S}_{n-k}=\sum_{i=k+1}^n\hat  Z_i^2$. From \eqref{cpnonp}, by the same steps considered in the Section \ref{sec:model}, we derive
 \begin{equation}
 \hat V_k= \left(\frac{k(n-k)}{n^2}\right)^\frac12\left(  \bar {\theta}_2^*- \bar {\theta}_1^*\right)
 \end{equation}

We are able to show that the asymptotic properties of the estimator $\tilde k_0$ defined in \eqref{cpnonp}, are equal to the ones of $\hat k_0$.

\begin{theorem}\label{teonon}
The same results presented in the Theorems \ref{teo1}-\ref{distes} hold for the estimator $\tilde k_0$.
\end{theorem}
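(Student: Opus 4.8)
The plan is to prove Theorem \ref{teonon} by reducing the unknown-drift problem to the known-drift situation already settled in Theorems \ref{teo1}--\ref{distes}, via a perturbation argument: substituting the kernel estimator $\hat b$ for the true drift $b$ only perturbs the building blocks $S_k$, $D_k$ and $V_k$ by quantities that are uniformly negligible in $k$ at the scales governing each theorem. The natural starting point is to compare $\hat Z_i$ with the oracle variable $Z_i=(X_{i+1}-X_i-b(X_i)\Delta_n)/\sqrt{\Delta_n}$ built from the true drift (here $\sigma\equiv 1$). One has the exact identity
\begin{equation*}
\hat Z_i=Z_i-\sqrt{\Delta_n}\,\bigl(\hat b(X_i)-b(X_i)\bigr),
\end{equation*}
whence
\begin{equation*}
\hat Z_i^2-Z_i^2=-2\sqrt{\Delta_n}\,Z_i\bigl(\hat b(X_i)-b(X_i)\bigr)+\Delta_n\bigl(\hat b(X_i)-b(X_i)\bigr)^2 .
\end{equation*}
Summation yields $\hat S_k=S_k+R_k$, with a remainder $R_k$ carrying a factor $\sqrt{\Delta_n}$ (resp.\ $\Delta_n$); the whole argument hinges on this prefactor vanishing in the high-frequency regime.

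First I would establish the uniform consistency of the drift estimator, $\sup_x|\hat b(x)-b(x)|=O_p(a_n)$ with $a_n\to 0$, from standard kernel-regression arguments for ergodic diffusions (the Silverman bandwidth $h_n$ balancing the bias $O(h_n^2)$ against the stochastic rate), invoking the recurrence and $\beta$-mixing structure recalled in Section \ref{sec:model}; this is precisely where the long-horizon reading $n\Delta_n=T\to\infty$ of the concluding Remark is needed for $\hat b$ to be consistent. Granting this, and using $Z_i=O_p(1)$, the quadratic part of $R_k$ is $O_p(\Delta_n n\,a_n^2)=O_p(T a_n^2)$ uniformly in $k$, while the cross part is bounded by $\sqrt{\Delta_n}\,\max_k\bigl|\sum_{i\le k}Z_i(\hat b-b)(X_i)\bigr|$. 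Since the relevant statistics are normalised by $S_n\asymp n$, and since $\sqrt{\Delta_n}=\sqrt{T/n}$, the contribution of $R_k$ to $\hat D_k-D_k=S_k/S_n-\hat S_k/\hat S_n$ is of order $a_n/\sqrt{n}$ uniformly in $k$, hence of smaller order than $D_k$ itself, which is of order $n^{-1/2}$ under $H_0$ and carries a signal of order $\vartheta_n$ near the true change point.

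With the remainder under control I would transfer each conclusion. Writing $\hat V_k=(k(n-k)/n^2)^{1/2}(\bar\theta_2^*-\bar\theta_1^*)$ and inserting $\hat S_k=S_k+R_k$ gives $\hat V_k=V_k+\rho_k$ with $\sup_k|\rho_k|$ negligible at the pertinent scale, so that $\tilde k_0=\arg\max_k|\hat V_k|$ inherits the first-order behaviour of $\hat k_0=\arg\max_k|V_k|$. The consistency rate \eqref{result1}, the sharpened rate \eqref{imrate}, the change-point limit law of Theorem \ref{dist} and the joint normality of $(\hat\theta_1,\hat\theta_2)$ in Theorem \ref{distes} then follow unchanged, because all four are controlled by the same limiting objects already analysed for the oracle statistics; for the last one I would note in addition that the per-observation drift bias in $\bar\theta_1^*$ and $\bar\theta_2^*$ is of order $1/(n h_n)$, hence $o(n^{-1/2})$ under the Silverman bandwidth, so that no bias correction is needed in \eqref{estcon}.

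The main obstacle is the uniform control of the cross remainder together with the dependence it creates: because $\hat b$ is built from the full sample, the increment $X_{i+1}-X_i$ entering $Z_i$ reappears in the numerator of $\hat b(X_i)$, so the summands $Z_i(\hat b-b)(X_i)$ are not martingale differences and carry a systematic self-contribution. I would isolate the weight of observation $i$ in $\hat b(X_i)$, which is $O(1/(n h_n))$, show that the resulting term is to leading order affine in $k$ and therefore cancels in the centred contrast defining $D_k$ and $V_k$ (and is in any case $o(n^{-1/2})$ relative to $S_n$), and dispatch the genuinely stochastic remainder by maximal inequalities adapted to the mixing structure of $X$. The $\sqrt{\Delta_n}$ prefactor is what ultimately absorbs the slower nonparametric rate $a_n$ and makes the whole perturbation vanish in the limit, so that $\tilde k_0$ is asymptotically equivalent to $\hat k_0$.
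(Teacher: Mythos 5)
Your overall skeleton coincides with the paper's: write $\hat Z_i=Z_i-\sqrt{\Delta_n}\,(\hat b(X_i)-b(X_i))$, expand $\hat Z_i^2-Z_i^2$ into a cross term and a quadratic term, and show the resulting perturbation of $\hat V_k$ is negligible at scale $n^{-1/2}$, so that $\tilde k_0$ inherits the behaviour of $\hat k_0$; this is exactly the paper's reduction \eqref{firstp} and decomposition \eqref{nonp}. The genuine gap is that you hang the whole argument on a uniform consistency rate $\sup_x|\hat b(x)-b(x)|=O_p(a_n)$, $a_n\to 0$, and you yourself observe that this forces the ergodic, long-horizon reading $n\Delta_n=T\to\infty$ together with mixing and bandwidth conditions. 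But Theorem \ref{teonon} is stated for the setup of Section \ref{sec:nonp}, which explicitly takes the observation scheme and the asymptotics to be those of Section \ref{sec:model}, namely $n\to\infty$, $\Delta_n\to 0$ with $n\Delta_n=T$ fixed. On a fixed time horizon the drift of a diffusion is not identifiable even from a continuous record, so no choice of bandwidth makes $\hat b$ consistent, and the first step of your plan is unattainable precisely in the regime the theorem is meant to cover.

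What your plan misses --- and the reason the paper's proof never needs consistency of $\hat b$ --- is that the perturbation already carries the factor $\sqrt{\Delta_n}$, so it suffices that the drift-estimation error be stochastically controlled, not vanishing. The paper obtains this directly: the kernel weights in $\hat b(X_i)$ sum to one and average the quantities $b(X_i)-(X_{j+1}-X_j)/\Delta_n$, which are controlled in $L^2$ by the elementary moment bounds \eqref{eq:med}--\eqref{eq:med2} (leading to \eqref{eq:bound}), with no ergodicity or $T\to\infty$ whatsoever; this yields $E(\mathcal{Z}_1)^2\leq C\theta_1\Delta_n\to 0$ and $\mathcal{Z}_2\stackrel{p}{\to}0$, hence \eqref{firstp}, and Theorems \ref{teo1}--\ref{distes} transfer to $\tilde k_0$. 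So you should decouple ``negligibility of the perturbation'' from ``consistency of $\hat b$'': bound the weighted average $b(X_i)-\hat b(X_i)$ by those moment estimates and let $\sqrt{\Delta_n}$ do the work. One genuine merit of your write-up is the attention to the dependence created by building $\hat b$ from the full sample (the increment $X_{i+1}-X_i$ appears both in $Z_i$ and in $\hat b(X_i)$), a point the paper's moment computation passes over silently; your device of isolating the $O(1/(nh_n))$ self-weight is the kind of argument needed to make that step airtight, but it should be grafted onto the paper's moment-bound route rather than onto a consistency requirement that fails at fixed $T$.
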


\section{Appendix}
As in Chen {\it et al.} (2005) some of the proofs are based on the ones in Bai (1994). We adapt Bai's theorems making the appropriate (but crucial) adjustments when needed, skipping all algebraic calculations which can be found in the original paper of the author.

\begin{proof}[Proof of Theorem \ref{H0}]
By setting $\xi_i=Z_i^2-1,$ under $H_0$ we note that $E (\xi_i)=0$ and $Var (\xi_i)=2.$ Let us introduce the quantity
$$
 Y_n(\tau)=\frac{1}{\sqrt{2n}} \mathcal{S}_{[n\tau]}+(n\tau-[n\tau]) \frac{1}{\sqrt{2n}}\xi_{[n\tau]+1},
$$
 where $\mathcal{S}_n=\sum_{i=1}^n\xi_i$.  It's no hard to see that
\begin{equation}\label{ugp}
\left|\frac{1}{\sqrt{2n}}\sum_{i=1}^{[n\tau]}\xi_i-\frac{\sqrt{\tau}}{\sqrt{2[n\tau]}}\sum_{i=1}^{[n\tau]}\xi_i\right|\stackrel{p}{\to}0,
\end{equation}
and
$Var\left(\frac{\sqrt{\tau}}{\sqrt{2[n\tau]}}\sum_{i=1}^{[n\tau]}\xi_i\right)
=\tau$. Since the Lindeberg condition is true
\begin{equation}\label{Lin}
\sum_{i=1}^{[n\tau]}\frac{E\left\{\mathbf{1}_{|\xi_i|\geq
\varepsilon
\sqrt{2n}\}}\xi_i^2\right\}}{2n}\to
0,
\end{equation}
 we can conclude that

\begin{equation}\label{inv}
\frac{1}{
\sqrt{2n}}\mathcal{S}_{\left[n\tau\right]}\stackrel{d}{\to}N(0,\tau).
\end{equation}

 By Donsker's theorem we have $Y_n\stackrel{d}{\to}W(\tau)$ that implies $Y_n(\tau)-\tau Y_n(1)\stackrel{d}{\to}W^0(\tau)$, where $W(\tau)$ and $W^0(\tau)$ are respectively a standard Brownian motion and a Brownian bridge. Let $[n\tau]=k,k=1,2,...,n,$ then
 \begin{eqnarray}\label{h0}
Y_n(\tau) -\tau Y_n(1)&=&\frac{1}{\sqrt{2n}}\mathcal{S}_{[n\tau]}-\frac{\tau}{\sqrt{2n}}\mathcal{S}_{n}+ (n\tau-[n\tau]) \frac{1}{\sqrt{2n}}\xi_{[n\tau]+1}\notag\\
&=&\frac{1}{\sqrt{2n}}\left[\mathcal{S}_k-\frac{k}{n}\mathcal{S}_n\right]+ (n\tau-[n\tau]) \frac{1}{\sqrt{2n}}\xi_{[n\tau]+1}.
 \end{eqnarray}
 Now, by observing that
 $$\mathcal{S}_k-\frac{k}{n}\mathcal{S}_n=\left[\sum_{i=1}^k(Z_i^2-1)-\frac{k}{n}\sum_{i=1}^n(Z_i^2-1)\right]=-D_k\sum_{i=1}^nZ_i^2,$$
from \eqref{h0} we have that
\begin{equation}
\sqrt{\frac{n}{2}}|D_k|\frac{\sum_{i=1}^nZ_i^2}{ n}=\left|X_n(\tau)-\tau X_n(1)-\frac{(n\tau-[n\tau]) }{\sqrt{2n}}\xi_{[n\tau]+1}\right|.
\end{equation}
As $n\to\infty,\Delta_n\to 0$ we get that $\frac{\sum_{i=1}^nZ_i^2}{ n}\to 1$
 and
$
\frac{(n\tau-[n\tau]) }{\sqrt{2n}}\xi_{[n\tau]+1}\stackrel{p}{\to} 0.
$
Hence the thesis of Theorem follows.

\end{proof}
\begin{proof}[Proof of Theorem \ref{teo1}]
By the same arguments of Bai (1994), Section 3 and by using the formulas (10)-(14) therein, we have that
\begin{equation}\label{boundin}
|\hat\tau_0-\tau_0|\leq  C_{\tau_0} (\theta_2-\theta_1)^{-1}\sup_k|V_k-EV_k|,
\end{equation}
where $C_{\tau_0}$
is a   constant depending only on $\tau_0$. Furthermore
\begin{eqnarray*}
V_k-EV_k&=&\frac{1}{\sqrt{n}}\sqrt{\frac{k}{n}}\frac{1}{\sqrt{n-k}}\sum_{i=k+1}^n ( Z_i^2-\theta_2)\\
&&+\frac{1}{\sqrt{n}}\sqrt{1-\frac{k}{n}}\frac{1}{\sqrt{k}}\sum_{i=1}^k (Z_i^2-\theta_1)
\end{eqnarray*}
then we can write
\begin{equation}\label{intp}
|V_k-EV_k|\leq \frac{1}{\sqrt{n}}\left\{\frac{1}{\sqrt{n-k}}\left|\sum_{i=k+1}^n ( Z_i^2-\theta_2)\right|+\frac{1}{\sqrt{k}}\left|\sum_{i=1}^k (Z_i^2-\theta_1)\right|\right\}.
\end{equation}
By applying Haj\'ek-Renyi inequality for martingales we have that
\begin{eqnarray}\label{eq:renyi}
P\left\{\max_{1\leq k \leq n}\left|\frac{\sum_{i=1}^k (Z_i^2-\theta_1)}{c_k}\right|>\alpha \right\} &\leq& \frac{1}{\alpha^2}\sum_{k=1}^n\frac{E(Z_k^2 -\theta_1) ^2}{c_k^2}\notag\\
&=& \frac{  2\theta_1^2}{\alpha^2} \sum_{k=1}^n\frac{1}{c_k^2}\end{eqnarray}
Choosing $c_k=\sqrt{k}$ and observing that $\sum_{k=1}^n k^{-1} \leq  C \log n$, for some $C>0$ (see e.g. Bai, 1994),  we have that
\begin{equation}\label{ratecon}
\max_{1\leq k \leq n}\frac{1}{\sqrt{k}}\sum_{i=1}^k Z_i=O_p\left(\sqrt{\log n}\right).
\end{equation}
The same result holds for $\frac{1}{\sqrt{n-k}}\left|\sum_{i=k+1}^n ( Z_i^2-\theta_2)\right|$, then from the relationships \eqref{intp} and \eqref{ratecon} we obtain the result \eqref{result1}.
\end{proof}
\begin{proof}[Proof of Theorem \ref{consim}]
We use the same framework of the proof of the Proposition 3 in Bai (1994), Section 4, therefore we omit the details.

We choose a $\delta>0$ such that $\tau_0\in (\delta,1-\delta)$. Since $\hat k_0/n$ is consistent for $\tau_0$, for every $\varepsilon>0$, $Pr\{\hat k_0/n\not\in (\delta,1-\delta)\}<\varepsilon$ when $n$ is large. In order to prove \eqref{imrate} it is sufficient to show that $Pr\{|\hat\tau_0-\tau_0|>M(n\vartheta_n^2)^{-1}\}$ is small when $n$ and $M$ are large, where $\vartheta_n=\theta_2-\theta_1$. We are interested to study the behavior of $V_k$ for $n\delta\leq k \leq n(1-\delta)$, $0<\delta<1$. We define for any $M>0$ the set $D_{n,M}=\{k:n\delta\leq k \leq n(1-\delta),|k-k_0|>M\vartheta_n^{-2}\}$. Then we have that
$$Pr\{|\hat\tau_0-\tau_0|>M(n\vartheta_n^2)^{-1}\}\leq \varepsilon+Pr\{\sup_{k\in D_{n,M}}|V_k|\geq |V_{k_0}|\},$$
for every $\varepsilon>0.$ Thus we study the behavior of $Pr\{\sup_{k\in D_{n,M}}|V_k|\geq |V_{k_0}|\}$. It is possible to prove that
\begin{equation}
\label{int1}
\begin{aligned}
Pr\left\{\sup_{k\in D_{n,M}}|V_k|\geq |V_{k_0}|\right\}\leq&\quad Pr\left\{\sup_{k\in D_{n,M}} V_k - V_{k_0}\geq 0\right\}\\
& +Pr\left\{\sup_{k\in D_{n,M}}V_k+V_{k_0}\leq 0\right\}\\
=&\quad P+Q
\end{aligned}
\end{equation}
Furthermore
\begin{eqnarray}\label{int2}
 Q
 &\leq& 2Pr\left\{\sup_{k\leq n(1-\delta)}\frac{1}{n-k}\left|\sum_{i=k+1}^n (Z_i^2-\theta_2)\right|\geq \frac{1}{4}EV_{k_0}\right\}\\
 &&+2Pr\left\{\sup_{k\geq n\delta}\frac{1}{k}\left|\sum_{i=1}^k (Z_i^2-\theta_1)\right|\geq \frac{1}{4}EV_{k_0}\right\}.\notag
\end{eqnarray}
By observing that $\sum_{i=m}^\infty i^{-2}=O(m^{-1})$,  the Haj\'{e}k-Renyi inequality yields
\begin{equation}\label{hr2}
P\left\{\max_{ k\geq  m}\left|\frac{1}{k}\sum_{i=1}^k (Z_i^2-\theta_1)\right|>\alpha \right\} \leq \frac{C_1}{\alpha^2m},
\end{equation}
for some constant $C_1<\infty$.
The inequality \eqref{hr2} implies that \eqref{int2} tends to zero as $n$ tends to infinity.

Let $d(k)=\sqrt{((k/n)(1-k/n))}, k=1,2,...,n,$ for the first term in the right-hand of \eqref{int1} we have that
\begin{equation}
\label{int3}
\begin{aligned}
P\leq & \quad Pr\left\{\sup_{k\in D_{n,M}}\frac{n}{|k_0-k|}|G(k)|>\frac{\vartheta_nC_{\tau_0}}{2}\right\}\\
&+Pr\left\{\sup_{k\in D_{n,M}}\frac{n}{|k_0-k|}|H(k)|>\frac{\vartheta_nC_{\tau_0}}{2}\right\}\\
=&\quad P_1+P_2,
\end{aligned}
\end{equation}
where
\begin{equation}\label{g}
G(k)=d(k_0)\frac{1}{k_0}\sum_{i=1}^{k_0}(Z_i^2-\theta_1)-d(k)\frac{1}{k}\sum_{i=1}^{k}(Z_i^2-\theta_1)
\end{equation}
\begin{equation}\label{h}
H(k)=d(k)\frac{1}{n-k}\sum_{i=k+1}^{n}(Z_i^2-\theta_2)-d(k_0)\frac{1}{n-k_0}\sum_{i=k_0+1}^{n}(Z_i^2-\theta_2)
\end{equation}
We prove that $P_1$ tends to zero when $n$ and $M$ are large. Thus
we consider only $k\leq k_0$ or more precisely those values of $k$ such that $n\delta\leq k \leq n\tau_0-M\vartheta_n^{-2}$. For $k\geq n \delta$, we have
\begin{equation}\label{modG}
|G(k)|\leq \frac{k_0-k}{n\delta k_0}\left|\sum_{i=1}^{k_0}(Z_i^2-\theta_1)\right|+B\frac{k_0-k}{n}\frac{1}{n\delta }\left|\sum_{i=1}^{k}(Z_i^2-\theta_1)\right|+\frac{1}{n\delta }\left|\sum_{i=k+1}^{k_0}(Z_i^2-\theta_1)\right|,
\end{equation}
where $B\geq 0$ satisfies $|d(k_0)-d(k)|\leq B|k_0-k|/n$.
By means of \eqref{eq:renyi}, \eqref{hr2} and \eqref{modG},  we obtain
\begin{eqnarray*}
P_1&\leq& Pr\left\{\frac{1}{n\tau_0}\left|\sum_{i=1}^{[n\tau_0]}(Z_i^2-\theta_1)\right|>\frac{\delta\vartheta_n C_{\tau_0}}{6}\right\}\\
&&+Pr\left\{\sup_{1\leq k \leq n}\frac{1}{n}\left|\sum_{i=1}^{k}(Z_i^2-\theta_1)\right|>\frac{\delta\vartheta_n C_{\tau_0}}{6B}\right\}\\
&&+Pr\left\{\sup_{ k \leq n\tau_0-M\vartheta_n^{-2}}\frac{1}{n\tau_0-k}\left|\sum_{i=k+1}^{[n\tau_0]}(Z_i^2-\theta_1)\right|>\frac{\delta\vartheta_n C_{\tau_0}}{6}\right\}\\
&\leq&\frac{36 \theta_1^2 }{(\delta C_{\tau_0})^2 \tau_0 n\vartheta_n^2}+\frac{36\theta_1^2B^2}{(\delta C_{\tau_0})^2  n\vartheta_n^2}+\frac{36\theta_1^2}{\delta C_{\tau_0}^2M}.
\end{eqnarray*}
 When $n$ and $M$ are large the last three terms are negligible. Analogously we derive the proof of $P_2$.

 \end{proof}
 \begin{proof}[Proof of Theorem \ref{dist}]
The proof follows the same steps in Bai (1994), Theorem 1, hence we only sketch the parts of the proof that differ.
We consider only $v\leq 0$ because of symmetry.
Let $K_n(v)=\{k:k=[k_0+v\vartheta_n^{-2}], -M\leq v \leq 0, M>0\}$ and
\begin{equation}
\Lambda_n(v)=n(V_k^2-V_{k_0}^2)
\end{equation}
with $k\in K_n(v).$ We note that
\begin{eqnarray}\label{con1}
n(V_k^2-V_{k_0}^2)&=&2nEV_{k_0}(V_k-V_{k_0})\notag\\
&&+2n(V_{k_0}-EV_{k_0})(V_k-V_{k_0})\notag\\
&&+n(V_k-V_{k_0})^2
\end{eqnarray}
The last two terms in \eqref{con1} are negligible on $K_n(v)$. Since $\sqrt{n}(V_{k_0}-EV_{k_0})$ is bounded by \eqref{intp}, we have to show that $\sqrt{n}|V_k-V_{k_0}|$ is bounded. In particular, we can write
$$\sqrt{n}|V_k-V_{k_0}|\leq \sqrt{n}|G(k)+H(k)|+\sqrt{n}|EV_k-EV_{k_0}|,$$
where $G(k)$ and $H(k)$ are defined respectively in \eqref{g} and \eqref{h}. The upper bound \eqref{modG} is $o_p(1)$, because the first term is such that
\begin{equation}\label{smallo}
\begin{aligned}
\sqrt{n}\frac{k_0-k}{n\delta k_0}\left|\sum_{i=1}^{k_0}(Z_i^2-\theta_1^2)\right|&
\leq \frac{M}{\delta\tau_0 n\vartheta_n^2}\left|\frac{1}{\sqrt{n}}\sum_{i=1}^{k_0}(Z_i^2-\theta_1)\right|\\
&=\frac{O_p(1)}{n\vartheta_n^2}=o_p(1),
\end{aligned}
\end{equation}
similarly for the second term and for the third term we apply the invariance principle \eqref{inv}.
Now we explicit the limiting distribution for
\begin{equation}
2nEV_{k_0}(V_k-V_{k_0})=2\sqrt{\tau_0(1-\tau_0)}n\vartheta_n(V_{[k_0+v\vartheta_n^{-2}]}-V_{k_0}).
\end{equation}
For simplicity we shall assume that $k_0+v\vartheta_n^{-2}$ and $v\vartheta_n^{-2}$ are integers. We observe that
\begin{equation}
n\vartheta_n(V_k-V_{k_0})=n\vartheta_n(G(k)+H(k))-n\vartheta_n(EV_{k_0}-EV_k),
\end{equation}
where $G(k),H(k)$ are defined in the expressions \eqref{g}, \eqref{h}. We can rewrite $G(k)$ as follws
\begin{equation}
\label{gbis}
\begin{aligned}
G(k) =&  \quad d(k_0)\frac{k-k_0}{kk_0}\sum_{i=1}^{k_0}(Z_i^2-\theta_1)+\frac{d(k_0)-d(k)}{k}\sum_{i=1}^k(Z_i^2-\theta_1)\\
&+d(k_0)\frac{1}{k}\sum_{i=k+1}^{k_0}(Z_i^2-\theta_1).
\end{aligned}
\end{equation}
By the same arguments used to prove \eqref{smallo} we can show that the first two terms in \eqref{gbis} multiplied by $n\vartheta_n$ are negligible on $K_n(M)$. Furthermore $d(k_0)=\sqrt{\tau_0(1-\tau_0)}$ and $n/k\to 1/\tau_0$ for $k\in K_n(M)$, then we get that
\begin{eqnarray}
n\vartheta_n G(k_0&+&v\vartheta_n^{-2})=n\vartheta_nd(k_0)\frac{1}{k}\sum_{i=k+1}^{k_0}(Z_i^2-\theta_1)+o_p(1)\notag\\
&=&d(k_0)\frac{n}{k}\left\{\vartheta_n\sum_{i=1}^{|v|\vartheta_n^{-2}}(Z_{i+k}^2-\theta_1)\right\}+o_p(1)\notag\\
&\stackrel{d}{\to}&\frac{\sqrt{(1-\tau_0)\tau_0}}{\tau_0} \sqrt{2}\theta_1W_1(-v)
\end{eqnarray}
where in the last step we have used the invariance principle \eqref{inv}. Analogously we can show that
\begin{equation}
n\vartheta_n H(k_0+v\vartheta_n^{-2})
\stackrel{d}{\to}\frac{\sqrt{(1-\tau_0)\tau_0}}{1-\tau_0} \sqrt{2}\theta_1W_1(-v).
\end{equation}
Since
\begin{equation}
n\vartheta_n(EV_{k_0}-EV_k)\to \frac{|v|}{1\sqrt{\tau_0(1-\tau_0)}}
\end{equation}
we obtain that
\begin{equation}
\Lambda_n(v)\stackrel{d}{\to}2\left\{ \sqrt{2}\theta_1W_1(-v)-\frac{|v|}{2}\right\}.
\end{equation}
In the same way, for $v>0$, we can prove that
\begin{equation}
\Lambda_n(v)\stackrel{d}{\to}2\left\{ \sqrt{2}\theta_1W_2(v)-\frac{|v|}{2}\right\}.
\end{equation}
By applying the continuous mapping theorem and Theorem \ref{consim}.
\begin{equation}
\frac{n\vartheta_n^2(\hat\tau_0-\tau_0)}{2\tilde \theta^2}\stackrel{d}{\to}
\frac{1}{2\theta_1^2}\arg\max_v\Lambda_n(v).
\end{equation}
Since $aW(v)\stackrel{d}{=}W(a^2 v),a\in \mathbb{R}$, a change in variable transforms
$\arg\max_v\Lambda_n(v)$ into
  $2\theta_1^2\arg \max_v\left\{W(v)-\frac{|v|}{2}\right\},$ which concludes the proof.
\end{proof}
\begin{proof}[Proof of Theorem \ref{estcon}]
We start noticing that
\begin{eqnarray}\label{conv}
&&\sqrt{n}(\hat\theta_1(\hat k_0)-\hat\theta_1(k_0))\\
&&=\sqrt{n}\left(\frac{1}{\hat k_0}\sum_{i=1}^{\hat k_0}Z_i^2-\frac{1}{ k_0}\sum_{i=1}^{k_0}Z_i^2\right)\notag\\
&& =\mathbf{1}_{\{\hat k_0\leq k_0\}}\left(\sqrt{n} \frac{k_0-\hat k_0}{k_0\hat k_0}\sum_{i=1}^{k_0}\left( Z_i^2-\theta_1\right)-\sqrt{n}\frac{1}{\hat k_0}\sum_{i=\hat k_0}^{k_0}\left( Z_i^2-\theta_1\right)\right)\notag\\
&&\quad+ \mathbf{1}_{\{\hat k_0 >k_0\}}\Biggl(\sqrt{n}
\frac{k_0-\hat k_0}{k_0\hat k_0}\sum_{i=1}^{k_0}\left(
Z_i^2-\theta_1\right)+\sqrt{n}\frac{1}{\hat k_0}\sum_{i=
k_0}^{\hat k_0}\left( Z_i^2-\theta_2\right)\notag\\
&& \quad+\sqrt{n}
\vartheta_n\frac{\hat k_0-k_0}{\hat k_0}\Biggr).\notag
\end{eqnarray}
Since $k_0=[\tau_0 n]$, $\hat k_0=k_0+O_p(\vartheta_n^{-2})$,
and $n\vartheta_n^2 \to \infty$, we have that \eqref{conv}
is $(\sqrt{n}\vartheta_n)^{-1}O_p(1)$, which converges to zero in
probability. Then $\hat\theta_1(\hat k_0)$ and $\hat\theta_1( k_0)$
have the same limiting distribution. Obviously the same result holds for $\hat\theta_2$. Now it is easy to show that the limiting distribution of $\sqrt{n}(\hat\theta_1(k_0),\hat\theta_2(k_0))$ is equal to \eqref{estcon}.

\end{proof}
\begin{proof}[Proof of Theorem \ref{teonon}]

To prove our thesis it is sufficient to show that
\begin{equation}\label{firstp}
\sqrt{n}(\hat V_k-\hat V_{k_0}-(  V_k- V_{k_0}))\stackrel{p}{\to } 0.
\end{equation}
where $V_k$ and $V_{k_0}$ in this proof are the statistics obtained by setting $\sigma(X_t)=1$ into equation \eqref{eq:sde}.
 We rewrite the left-hand side of \eqref{firstp} as follows
 \begin{eqnarray}\label{secondtp}
&& \sqrt{n}\sqrt{\frac{k}{n}\left(1-\frac{k}{n}\right)}\left[\frac{1}{n-k}(\hat S_{n-k}-S_{n-k})-\frac{1}{k}(\hat S_{k}-S_{k})\right]\\
 &&-\sqrt{n}\sqrt{\frac{k_0}{n}\left(1-\frac{k_0}{n}\right)}\left[\frac{1}{n-k_0}(\hat S_{n-k_0}-S_{n-k_0})-\frac{1}{k_0}(\hat S_{k_0}-S_{k_0})\right].\nonumber
 \end{eqnarray}
The first term of \eqref{secondtp} is equal to
\begin{eqnarray}\label{tertp}
&& \sqrt{n}\sqrt{\frac{k}{n}\left(1-\frac{k}{n}\right)}\left[\frac{1}{n-k}(\hat S_{n-k}-S_{n-k})-\frac{1}{k}(\hat S_{k}-S_{k})\right]\nonumber\\
 &&=\sqrt{n}\sqrt{\frac{k}{n}\left(1-\frac{k}{n}\right)}\left[\frac{1}{n-k}\sum_{i=k+1}^n(\hat Z_{i}^2-Z_{i}^2)-\frac{1}{k}\sum_{i=1}^k(\hat Z_{i}^2-Z_{i}^2)\right]\nonumber\\
&&=\sqrt{\frac{n}{n-k}}\sqrt{\frac{k}{n}\left(1-\frac{k}{n}\right)}\frac{1}{\sqrt{n-k}}\sum_{i=k+1}^n(\hat Z_{i}^2-Z_{i}^2)\nonumber\\
&&\quad-\sqrt{\frac{n}{k}}\sqrt{\frac{k}{n}\left(1-\frac{k}{n}\right)}\frac{1}{\sqrt{k}}\sum_{i=1}^k(\hat Z_{i}^2-Z_{i}^2)
 \end{eqnarray}
We observe that
\begin{eqnarray}\label{nonp}
\frac{1}{\sqrt{k}}\sum_{i=1}^k(\hat Z_{i}^2-Z_{i}^2)
&=&\frac{2}{\sqrt{k}}\sum_{i=1}^kZ_i(\hat Z_{i}-Z_{i})+\frac{1}{\sqrt{k}}\sum_{i=1}^k(\hat Z_{i}-Z_{i})^2\nonumber\\
&=&2\mathcal{Z}_1+\mathcal{Z}_2,
\end{eqnarray}
The next step is to prove that \eqref{nonp} tends to zero in probability. In fact,
by simple calculations we can write
\begin{eqnarray*}
\mathcal{Z}_1
&=&\frac{\sqrt{\theta_1}\sqrt{\Delta_n}}{\sqrt{k}}\sum_{i=1}^k(b(X_i)-\hat b(X_i))\frac{W_{i+1}-W_i}{\sqrt{\Delta_n}}\notag\\
&=&\frac{\sqrt{\theta_1}\sqrt{\Delta_n}}{\sqrt{k}}\sum_{i=1}^k\frac{(W_{i+1}-W_i)}{\sqrt{\Delta_n}}\frac{\sum_{j=1}^nK\left(
\frac{X_j-X_i}{h_n}\right)\left(b(X_i)-\frac{
X_{j+1}-X_j}{\Delta_n}\right)}{\sum_{j=1}^nK\left(
\frac{X_j-X_i}{h_n}\right)}
\end{eqnarray*}
Then we have that
\begin{eqnarray*}
E(\mathcal{Z}_1)^2=\frac{\theta_1\Delta_n}{k}E\left(\sum_{i=1}^k\frac{(W_{i+1}-W_i)}{\sqrt{\Delta_n}}\frac{\sum_{j=1}^nK\left(
\frac{X_j-X_i}{h_n}\right)\left(b(X_i)-\frac{
X_{j+1}-X_j}{\Delta_n}\right)}{\sum_{j=1}^nK\left(
\frac{X_j-X_i}{h_n}\right)}\right)^2
\end{eqnarray*}
We note the following fact
\begin{eqnarray}\label{eq:med}
&&E\left(X_{j+1}-X_j-b(X_i)\Delta_n\right)^{2}\notag\\
&&=E\left(\sqrt{\theta_1}(W_{j+1}-W_j)+\int_{t_{j}}^{t_{j+1}}[b(X_s)-b(X_i)]ds\right)^{2}\notag\\
&&\leq 2\left\{\theta_1E(W_{j+1}-W_j)^{2}
+E\left(\int_{t_{j}}^{t_{j+1}}[b(X_s)-b(X_i)]ds\right)^{2}\right\}\notag\\
&&=2\left\{\theta_1\Delta_n
+E\left(\int_{t_{j}}^{t_{j+1}}[b(X_s)-b(X_i)]ds\right)^{2}\right\}.
\end{eqnarray}
Furthermore
\begin{equation}\label{eq:med2}
E\left(\int_{t_{j}}^{t_{j+1}}[b(X_s)-b(X_i)]ds\right)^{2}=O(\Delta_n^{2})
\end{equation}
because the drift coefficient $b(\cdot)$ has continuous derivatives, therefore is locally Lipschitz (see e.g. A\"it-Sahalia, 1996), and the process $X_t$ does not explode in a finite time. Then
by means of relationships \eqref{eq:med} and \eqref{eq:med2}, we can claim that 
\begin{equation}\label{eq:bound}
\frac{X_{j+1}-X_j-b(X_i)\Delta_n}{\Delta_n}=O_p(1).
\end{equation}
The expression \eqref{eq:bound} allows us to write
\begin{eqnarray*}
E(\mathcal{Z}_1)^2\leqslant C\frac{\theta_1\Delta_n}{k} E\left(\sum_{i=1}^k\frac{(W_{i+1}-W_i)}{\sqrt{\Delta_n}}\right)^2=C\theta_1 \Delta_n\to 0
\end{eqnarray*}
for some real constant $C$. The same arguments permit us to obtain that
\begin{equation*} 
\mathcal{Z}_2=\frac{\Delta_n}{\sqrt{k}}\sum_{i=1}^k(b(X_i)-\hat
b(X_i))^2\stackrel{p}{\to}0.
\end{equation*}

Similarly we can develop the second term of \eqref{secondtp}, so the proof is complete.

\end{proof}

\end{document}